\documentclass{amsart}
\newif\ifdraft
\draftfalse

\usepackage{amssymb,amsmath,amsthm,indentfirst,xspace,bm,bbold}
\usepackage{pdfsync}
\usepackage{framed}
\usepackage{mathtools}
\ifdraft
  \usepackage[notref,notcite]{showkeys}
\fi
\usepackage[normalem]{ulem}
\usepackage[pdfborder={0 0 .1},breaklinks,colorlinks=true]
           {hyperref}
\hypersetup{urlcolor=blue, citecolor=red}
\usepackage[initials,nobysame]{amsrefs}

\makeatletter
\def\@wraptoccontribs#1#2{}

\@mparswitchfalse
\makeatother

\ifdraft
    \newcommand{\sidenote}[1]{\pdfsyncstop\marginpar[\raggedleft\tiny #1]{\raggedright\tiny #1}\pdfsyncstart}
    
\else
    \newcommand{\sidenote}[1]{}
    
\fi
\newcommand{\warning}[1]{\typeout{}\typeout{WARNING: #1 at line \the\inputlineno}\typeout{}}
\newenvironment{todo}[1][TODO]{%
    \ifdraft\else\warning{TODO still present in final version}\fi
    \MakeFramed{\advance\hsize-\width \FrameRestore}\textbf{#1. }}%
    {\endMakeFramed}

  {\end{todo}
}

%
%
\makeatletter
\newcommand{\UWave}[2][blue]{\bgroup \markoverwith{\textcolor{#1}{\lower3.5\p@\hbox{\sixly \char58}}}\ULon{#2}}
\makeatother
\newcommand{\SOut}[2][red]{\bgroup\markoverwith {\textcolor{#1}{\rule[.45ex]{2pt}{.1ex}}}\ULon{#2}}

\newcommand{\highlight}[2][yellow]{\bgroup\markoverwith {\textcolor{#1}{\rule[-.2em]{2pt}{1.2em}}}\ULon{#2}}
\newcommand{\numberthis}{\addtocounter{equation}{1}\tag{\theequation}}

%
%
%
    {\left\{\begin{IEEEeqnarraybox}[\relax][c]{#1}}%
    {\end{IEEEeqnarraybox}\right.}

%
%

\newcommand{\grad}{\nabla}

\newcommand{\eps}{\varepsilon}
\renewcommand{\epsilon}{\eps}



%
%
\newcommand{\R}{\mathbb{R}}
\newcommand{\Z}{\mathbb{Z}}

\newcommand{\C}{\mathbb{C}}
\newcommand{\T}{\mathbb{T}}

%
%
\newif\iftextstyle
\textstyletrue
\everydisplay\expandafter{\the\everydisplay\textstylefalse}

%
%
\DeclarePairedDelimiter{\abs}{\lvert}{\rvert}
\DeclarePairedDelimiter{\norm}{\lVert}{\rVert}
\DeclarePairedDelimiter{\average}{\langle}{\rangle}

\newcommand{\ip}[2]{\average{#1,#2}}
\newcommand{\cip}[2]{\average{#1,#2}_\C}

%
%

\allowdisplaybreaks

%
%
\newtheoremstyle{compactplain}
  {\smallskipamount}
  {\smallskipamount}
  {\itshape}
  {}
  {\bfseries}
  {.}
  { }
  {}
\theoremstyle{plain}
\newtheorem{theorem}{Theorem}[section]
\newtheorem{lemma}[theorem]{Lemma}

\newtheorem*{theorem*}{Theorem}
\newtheorem*{lemma*}{Lemma}
\newtheorem*{proposition*}{Proposition}
\newtheorem*{corollary*}{Corollary}
\newtheorem*{problem*}{Problem}

\newtheoremstyle{noparen}
  {}
  {}
  {\itshape}
  {}
  {\bfseries}
  {.}
  { }
  {\thmname{#1}\thmnumber{ #2}\thmnote{ #3}}
\theoremstyle{noparen}

\theoremstyle{definition}

\theoremstyle{remark}
\newtheorem{remark}[theorem]{Remark}
\newtheorem*{remark*}{Remark}

%
%

\begin{document}
\title[Gaining two derivatives in the NSEs.]
      {Gaining two derivatives on a singular force in the 2D Navier-Stokes Equations.}
\author{Alexey Cheskidov}
    \address{Department of Mathematics, Statistics, and Computer Science\\
        University of Illinois at Chicago\\
        322 Science and Engineering Offices (M/C 249)\\
        851 S. Morgan Street\\
        Chicago, Illinois 60607-7045 USA}
    \email{acheskid@uic.edu}
\author{Landon Kavlie}
    \address{Department of Mathematics, Statistics, and Computer Science\\
        University of Illinois at Chicago\\
        322 Science and Engineering Offices (M/C 249)\\
        851 S. Morgan Street\\
        Chicago, Illinois 60607-7045 USA}
    \email{lkavli2@uic.edu}
\thanks{The authors were partially supported by NSF Grant DMS-1108864.}

\begin{abstract}

It has long been known, for the autonomous 2D Navier-Stokes equations with singular forcing in $H^{-1}$, that there exist unique solutions which are globally in $L^2$, a gain of one derivative. These classical techniques also show us that the solution is almost everywhere in $H^1$. On the other hand, if the forcing term is in $L^2$, it is known that the solution remains in $H^2$ globally, a gain of two derivatives. In this paper, we explore classical techniques to show that if the force is in $H^\alpha$ for $\alpha \in (-1,0)$, then the solution gains two derivatives globally. These methods break down for forces in $H^{-1}$. In this scenario, we use a Littlewood-Paley decomposition in Fourier space to show that a solution which exists in $H^1$ at some time $t$ must then remain in $H^1$ for a small interval of time $[t,t+\eps)$.
\end{abstract}

\maketitle

\section{Introduction}

The autonomous 2D Navier-Stokes equations on the torus $\T^2$ are given by 
\begin{equation}
  \label{nse}
  \left\{
    \begin{array}{l}
      u_t + (u \cdot \grad) u - \nu\Delta u + \grad p = f \\
      \grad \cdot u = 0.
    \end{array}
  \right.
\end{equation}
Due to the pioneering work of Leray (\cite{L34}), we know that the 2D Navier-Stokes equations admit unique solutions $u(t)$ with $u \in C([0,T],L^2) \cap L^2([0,T],H^1)$ for each $T \ge 0$ with a forcing term $f \in H^{-1}$. Moreover, in both two and three dimensions, these solutions satisfy the energy inequality. That is, for each $t \ge 0$, our solution $u(t)$ satisfies 
\begin{equation}
\label{energy_ineq}
  \|u(t)\|_2^2 + \nu \int_0^t \|u(s)\|_{H^1}^2 ds 
    \le \|u_0\|_2^2 + \frac{1}{\nu} t \|f\|_{H^{-1}}^2.
\end{equation}

With a ``smoother" force $f\in L^2$, then (\ref{nse}) admits unique solutions $u\in C([0,T],H^1) \cap L^\infty([0,T],H^2)$ for all times $T \ge 0$. For more information on these results, see \cite{CF88}, \cite{R01}, \cite{T84}, among others. On the other hand, for many linear parabolic equations, like the heat equation, we find that $u \in C([0,T],H^{\alpha+2})$ as long as $u(0) \in H^{\alpha+2}$ and $f \in H^\alpha$. The heat equation always gains two derivatives on the force, even with a singular force, such as $f \in H^{-1}$. 

This gap between the Navier-Stokes equations, and linear parabolic equations such as the heat equation was studied by Constantin and Seregin for the Navier-Stokes equations \cite{CS13a} and later for the Fokker-Plank equations \cite{CS13b}. Their analysis involves using the modulus of continuity in physical space. They find that with a forcing term $f\in W^{-1,q}$ with $q>4$, the solution $u$ remains H\"older continuous with exponent $1-4/q$. That is, in an $L^\infty$ sense, the function gains $2-\epsilon$ derivatives, for any $\epsilon \ge 0$ (with ``smoother forces" required to reach a two derivative gain). To further bridge this gap, our analysis uses the technique of Littlewood-Paley decompositions in Fourier space to show that $u$ remains in $H^1$ locally in time, a gain of two derivatives on the force $f\in H^{-1}=W^{-1,2}$. Specifically, we prove the following theorem.
\begin{theorem}
\label{main}
Let $u$ be the unique solution to (\ref{nse}) with $u(0):=u_0 \in H^1$. Then, there exists $T:=T(u_0,f)$ so that $u \in L^\infty([0,t_0], H^1)$ for all $0 < t_0 < T$.
\end{theorem}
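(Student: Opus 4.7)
The plan is to establish an a priori $H^1$ bound for Galerkin approximations $u^n$ of \eqref{nse} (with forcing $f_n = P_{\le 2^n}f \in L^2$ and initial data $u_0$), uniformly in $n$ on a short time interval $[0, T(u_0, f)]$, and then pass to the limit; uniqueness of Leray--Hopf solutions identifies the limit with $u$.

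Applying the Littlewood--Paley projector $\Delta_j$ to \eqref{nse} and pairing with $\Delta_j u^n$ yields, for each $j$,
\begin{equation*}
  \frac{1}{2}\frac{d}{dt}\|\Delta_j u^n\|_{L^2}^2 + \nu 2^{2j}\|\Delta_j u^n\|_{L^2}^2 \lesssim |\ip{\Delta_j f_n}{\Delta_j u^n}| + |\ip{\Delta_j(u^n\cdot\grad u^n)}{\Delta_j u^n}|.
\end{equation*}
Multiplying by $2^{2j}$ and summing gives $\frac{1}{2}\frac{d}{dt}\|u^n\|_{H^1}^2 + \nu\|u^n\|_{H^2}^2$ on the left-hand side. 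The nonlinear term is handled via Bony's paraproduct decomposition; the 2D vorticity cancellation means that, after absorption into the dissipation, it contributes a polynomial expression in $\|u^n\|_{L^\infty_t L^2}$ and $\|u^n\|_{L^2_t H^1}$, both globally controlled by \eqref{energy_ineq}.

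The central difficulty is the forcing term. A direct Cauchy--Schwarz on $\sum_j 2^{2j}|\ip{\Delta_j f_n}{\Delta_j u^n}|$ forces one to pay either $\|f\|_{L^2}$ (unavailable) or $\|u^n\|_{H^3}$ (not controlled by the available dissipation $\nu\|u^n\|_{H^2}^2$). The LP strategy is a dyadic split $f = P_{\le J}f + P_{>J}f$: the low-frequency part satisfies $\|P_{\le J}f\|_{L^2}\lesssim 2^J\|f\|_{H^{-1}}$ and is absorbed by Young at the cost of a constant depending on $J$, while the high-frequency part, which by Fourier orthogonality pairs only with the high-frequency LP blocks of $u^n$, is controlled using the smallness $\|P_{>J}f\|_{H^{-1}}\to 0$ as $J \to\infty$ together with block-by-block Bernstein estimates.

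The main obstacle is closing the high-frequency contribution: pairing $P_{>J}f$ against $\Delta_j u^n$ for $j > J$ still loses a derivative relative to the available dissipation, despite the $H^{-1}$ smallness of the forcing. I expect the resolution is a careful choice of $J$ (growing slowly in time, and perhaps varying block-by-block with $j$) that exploits the exponential heat-kernel factor $e^{-c\nu 2^{2j} t}$ on high-frequency LP blocks to extract the missing smallness. Once the differential inequality for $\sum_j 2^{2j}\|\Delta_j u^n(t)\|_{L^2}^2$ is closed, Gronwall's inequality produces a uniform-in-$n$ bound on $\|u^n(t)\|_{H^1}$ for $t\in[0,T]$; passing $n\to\infty$ and using weak-$*$ compactness together with uniqueness of Leray--Hopf solutions completes the proof.
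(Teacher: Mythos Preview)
Your proposal has a genuine gap, and it also inverts the roles of the two terms relative to the paper's argument.

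First, the gap. You correctly observe that after summing the block estimates, the forcing contribution $\sum_j 2^{2j}\ip{\Delta_j f}{\Delta_j u}$ cannot be closed with only $\|f\|_{H^{-1}}$ and the available dissipation $\nu\|u\|_{H^2}^2$; one derivative is missing. Your proposed high/low split does not cure this: for $j>J$ you still face $\sum_{j>J} 2^{2j}\|\Delta_j f\|_2\|\Delta_j u\|_2$, and no matter how you distribute powers of $2^j$, Cauchy--Schwarz leaves either an $H^3$ norm of $u$ or an $L^2$ norm of $f$. Smallness of $\|P_{>J}f\|_{H^{-1}}$ is irrelevant because there is nothing to absorb into. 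You acknowledge this (``I expect the resolution is\ldots''), but that expectation is precisely the missing step, and your proposal does not supply it. Invoking the heat-kernel factor $e^{-c\nu 2^{2j}t}$ requires integrating the block ODE in time \emph{before} summing in $j$, which is not what you set up.

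Second, the comparison with the paper. The paper's argument runs in the opposite direction: it applies Duhamel to each frequency block separately, and \emph{then} multiplies by $\lambda_q^2$ and sums. Doing so, the force term becomes trivial: Young's inequality gives $\frac{1}{\nu\lambda_q^2}\|f_q\|_2^2$, and after Duhamel one picks up an additional $\lambda_q^{-2}$ from $\int_0^t e^{-\nu\lambda_q^2(t-s)}\,ds$, so the summed contribution is exactly $\frac{2}{\nu^2}\|f\|_{H^{-1}}^2$. The price is that the two-dimensional enstrophy cancellation $(B(u,u),Au)=0$ is lost at the block level, and the nonlinear term becomes the hard part. The paper estimates it by H\"older and Bernstein as $C\lambda_q^{2p-2}\|u\|_{H^1}^4$ with $p\in(0,1)$; after Duhamel and summation this produces a convolution $\int_0^t K(t-s)\|u(s)\|_{H^1}^4\,ds$ with $K(\tau)\sim\sum_q e^{-\nu\lambda_q^2\tau}\lambda_q^{2p}$. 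The key computation shows $K(\tau)\lesssim \tau^{-2/3}$ for a suitable choice of $p$, and a nonlinear (Volterra-type) Gronwall then yields the \emph{local} bound.

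Note in particular that your assertion that the nonlinear term ``contributes a polynomial expression in $\|u\|_{L^\infty_tL^2}$ and $\|u\|_{L^2_tH^1}$'' would, if correct in your setup, imply a \emph{global} $H^1$ bound once the force is handled --- stronger than what the theorem claims and than what the paper proves. In the block-by-block framework you have written down, that cancellation does not hold exactly, and in the paper's framework the nonlinear term is precisely what forces the result to be only local.
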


Intervals where $u\in H^1$ are known as intervals of regularity for the Navier-Stokes equations. Using Theorem~\ref{main}, one can prove a Leray characterization for the 2D Navier-Stokes equations with force $f\in H^{-1}$. That is, $[0,\infty)=\cup_j [a_j,b_j)$ with $u(t)\in H^1$ for all $t\in[a_j,b_j)$. This result is well-known for the 3D Navier-Stokes equations with a force $f\in L^2$. A detailed discussion of these results in the three dimensional case can be found in \cite{CF88} or \cite{T84}, among others.

To complete our study, we explore the use of classical techniques in the intermediate spaces where $f \in H^\alpha$ with $\alpha \in (-1,0)$. In this setting, we show that classical techniques work to give a global gain of two derivatives. That is, we prove the following theorem: 
\begin{theorem}
\label{intermediate}
Let $f \in H^{\alpha}$ and $u(0) := u_0 \in H^{\alpha+2}$ for some $\alpha \in (-1,0)$. Then, there exists a solution $u(t)$ to (\ref{nse}) so that $u \in L^\infty([0,\infty), H^{\alpha+2})$.
\end{theorem}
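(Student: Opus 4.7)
The plan is to construct the solution via Galerkin approximation and derive a uniform a priori bound in $L^\infty_t H^{\alpha+2}$ that survives the limit. Because $H^\alpha \hookrightarrow H^{-1}$, the classical Leray theory already furnishes a global weak solution $u \in C([0,\infty); L^2) \cap L^2_\loc([0,\infty); H^1)$ satisfying (\ref{energy_ineq}); the remaining task is to upgrade this solution to the stronger regularity class $L^\infty H^{\alpha+2}$.

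The heart of the argument is a formal energy identity at the $H^{\alpha+2}$ level: differentiating $\|u\|_{H^{\alpha+2}}^2$ along the Galerkin-truncated equation yields
\[
\tfrac{1}{2}\tfrac{d}{dt}\|u\|_{H^{\alpha+2}}^2 + \nu\|u\|_{H^{\alpha+3}}^2 = \langle f, u\rangle_{H^{\alpha+2}} - \langle (u\cdot\grad)u, u\rangle_{H^{\alpha+2}}.
\]
The trilinear term is handled by the Banach-algebra property of $H^{\alpha+2}$ in two dimensions (valid since $\alpha + 2 > 1 = d/2$): standard Sobolev product estimates give $\|(u\cdot\grad)u\|_{H^{\alpha+1}} \lesssim \|u\|_{H^{\alpha+2}}^2$, so Cauchy--Schwarz followed by Young bounds this contribution by $\tfrac{\nu}{4}\|u\|_{H^{\alpha+3}}^2 + C\nu\inv\|u\|_{H^{\alpha+2}}^4$, the first summand being absorbed by the dissipation.

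The forcing term is the main obstacle. A direct duality pairing $|\langle f, u\rangle_{H^{\alpha+2}}|$ calls for $\|u\|_{H^{\alpha+4}}$, one derivative beyond what the dissipation $\|u\|_{H^{\alpha+3}}^2$ provides. To bridge this gap I would use the strict inequality $\alpha > -1$: write $f = \dv F$ with $\|F\|_{H^{\alpha+1}} \lesssim \|f\|_{H^\alpha}$, integrate by parts, and redistribute the Fourier weights $(1+|k|^2)^{\alpha+2}$ between $F$ and $\grad u$ so that the resulting pairing is controlled by $\|f\|_{H^\alpha}$ times a norm absorbable into $\nu\|u\|_{H^{\alpha+3}}^2$. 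The room for this redistribution shrinks to zero as $\alpha \downarrow -1$, which is precisely why the classical approach fails at the endpoint and the case $\alpha = -1$ forces the separate Littlewood--Paley argument of Theorem~\ref{main}.

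With both contributions bounded, the inequality
\[
\tfrac{d}{dt}\|u\|_{H^{\alpha+2}}^2 + \nu\|u\|_{H^{\alpha+3}}^2 \lesssim \nu\inv\|f\|_{H^\alpha}^2 + \nu\inv\|u\|_{H^{\alpha+2}}^4
\]
is closed globally by Gronwall combined with interpolation against the Leray--Hopf control in $L^\infty L^2 \cap L^2 H^1$; crucially, because $\alpha < 0$, the interpolation exponents expressing $\|u\|_{H^{\alpha+2}}^4$ in terms of $\|u\|_{H^{\alpha+3}}$ and the a priori low-regularity bounds are strictly subcritical, which prevents finite-time blowup of $\|u\|_{H^{\alpha+2}}^2$. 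Standard Aubin--Lions compactness then allows passage to the Galerkin limit to produce the desired $u \in L^\infty([0,\infty), H^{\alpha+2})$.
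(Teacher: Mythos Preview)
Your direct $H^{\alpha+2}$ energy approach has a genuine gap in the forcing term, and the ``redistribution'' you propose cannot close it. Testing the equation with $A^{\alpha+2}u$ produces the pairing $\langle f, A^{\alpha+2}u\rangle$; splitting the Fourier weight in any way gives
\[
  |\langle f, A^{\alpha+2}u\rangle| \le \|f\|_{H^{s}}\,\|u\|_{H^{2(\alpha+2)-s}}
\]
for any $s$, and since you only control $\|f\|_{H^\alpha}$ you are forced to take $s=\alpha$, landing on $\|u\|_{H^{\alpha+4}}$. The dissipation supplies only $\|u\|_{H^{\alpha+3}}^2$, so you are short by exactly one derivative --- and this deficit is \emph{independent of $\alpha$}. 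Writing $f=\dv F$ with $F\in H^{\alpha+1}$ and integrating by parts merely shifts one derivative from $f$ to $u$ and reproduces the same count. In short, there is no ``room'' that shrinks as $\alpha\downarrow -1$; there is no room at all, for any $\alpha$.

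This is precisely why the paper does \emph{not} run the energy method at the $H^{\alpha+2}$ level. Instead it works one level lower: testing with $A^{\alpha+1}u$ yields dissipation in $H^{\alpha+2}$ and a forcing contribution $\|f\|_{H^\alpha}\|u\|_{H^{\alpha+2}}$, which \emph{does} match. Together with the nonlinearity estimate of Lemma~\ref{nonlinearity_estimates} this gives a global uniform bound in $H^{\alpha+1}$ (Theorem~\ref{one_deriv_gain}). The passage from $H^{\alpha+1}$ to $H^{\alpha+2}$ is then made indirectly: complexify time, use the Riccati-type inequality to extend the Galerkin solutions analytically into a neighborhood of the real axis with uniform $H^{\alpha+1}$ bounds, invoke Cauchy's integral formula to bound $\|u_t\|_{H^{\alpha+1}}$, and finally read off $\|Au\|_{H^\alpha}$ directly from the equation $\nu Au = f - u_t - B(u,u)$, using Lemma~\ref{nonlinearity_estimates}, interpolation, and Young to absorb the stray $\|u\|_{H^{\alpha+2}}$ on the right. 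The interpolation step $\|u\|_{H^1}\le C\|u\|_2^{1/(\alpha+2)}\|u\|_{H^{\alpha+2}}^{(\alpha+1)/(\alpha+2)}$ is where the restriction $\alpha>-1$ actually enters, since the resulting exponent on $\|u\|_{H^{\alpha+2}}$ stays strictly below $1$ and Young's inequality can be applied; at $\alpha=-1$ this collapses and the method fails, motivating the separate Littlewood--Paley argument. Your nonlinear estimate via the algebra property of $H^{\alpha+2}$ is fine, but to salvage the overall scheme you would need to drop to the $H^{\alpha+1}$ energy level and then find a separate mechanism to recover the last derivative.
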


The outline of our paper is as follows: In Section~\ref{preliminaries}, we introduce the classical setting for studying the Navier-Stokes equations, including the use of the Leray projection. We also recap the Littlewood-Paley decomposition of a function. In Section~\ref{heat}, we use the example of the heat equation to demonstrate our Fourier techniques for studying the Navier-Stokes equations in a ``simpler" setting. In Section~\ref{intermediate_case}, we use classical techniques to prove Theorem~\ref{intermediate}. To estimate the nonlinear term in the intermediate space $H^{\alpha}$ for $\alpha \in (-1,0)$, we use a Littlewood-Paley decomposition and Bony's paraproduct (\cite{B81}) as demonstrated in Lemma~\ref{nonlinearity_estimates}. Finally, we apply our Fourier analysis techniques to prove Theorem~\ref{main} in Section~\ref{proof}. 

Throughout this paper, we use the usual convention in which $C$ denotes an arbitrary, positive constant which may change from line to line. On the other hand, $C_0$, $C_1$, $C_2$, etc. are fixed constants.

\section{Preliminaries}
\label{preliminaries}

\subsection{The Projected Navier-Stokes Equations}

As this paper expands on classical theory, we will use classical notation as seen in (\cite{CF88},\cite{T84}) when exploring classical techniques. In Section~\ref{proof}, when employing harmonic analysis techniques, we use the unprojected Navier-Stokes equations (\ref{nse}) so that the reader may more easily track the elements such as derivatives. To begin, we apply the Leray projection $P_\sigma$ onto divergence-free vector fields to (\ref{nse}). For notational simplicity, we will assume that $P_\sigma f = f$. This gives us the projected equation 
\begin{equation}
\label{proj_nse}
  u_t + B(u,u) + \nu Au = f.
\end{equation}
Here, 
\begin{align*}
  B(u,u) &:= P_\sigma (u\cdot \grad) u \\
  Au      &:= P_\sigma (-\Delta) u.
\end{align*}

The Stokes operator $A$ is an unbounded, self-adjoint, positive definite operator with discrete eigenvalues $0 < \mu_1 \le \mu_2 \le \cdots$ (we avoid the classical use of $\lambda_p$ as that notation has a special designation which will be introduced in the next section). With corresponding eigenfunctions $w_j$, it is known that for $u = \sum_j a_j w_j$,
\[
  A u = \sum_j \mu_j a_j w_j.
\]
Thus, we may define the fractional Stokes operator as 
\[
  A^\alpha u := \sum_j \mu_j^\alpha a_j w_j
\]

Recall that the Sobolev space $H^\alpha(\T^2)$ for $\alpha \in \R$ is a Hilbert space with the norm 
\[
  \|u\|_{H^\alpha(\T^2)} := 
    \left(
      \sum_{n \in \Z^2} |n|^{2\alpha} |\hat{u}(n)|^2
    \right)^{1/2}
\]
where $\hat{u}(n)$ is the $n$th Fourier coefficient of $u$. Note that, using the operator $A$, an equivalent defintion of the $H^\alpha(\T^2)$ norm is given by
\[
  \|u\|_{H^\alpha(\T^2)} = \| A^{\alpha/2} u\|_{L^2(\T^2)}.
\]
For the remainder of this paper, we will omit $\T^2$ from our norms as it is implied. Moreover, we will use the standard convention
\[
  \|u\|_p := \|u\|_{L^p}
\]

\begin{remark}
It is customary to project the spaces $H^\alpha$ as well when using the projected equations. We will not make this distinction to more easily track the underlying space in which we are working. The fact that the evolution occurs in a projected space is coincidentally implied.
\end{remark}

\subsection{Littlewood-Paley Decomposition}

In this section, we briefly describe the Littlewood-Paley decomposition and the Littlewood-Paley theorem. This describes how to relate Sobolev norms in physical space via a particular breakdown in Fourier space. For more information on this theory, see, for example, the book by Chemin \cite{C98}, among others. 

Choose a nonnegative radial function $\chi \in C_0^\infty(\R^2)$ so that 
\[
  \chi(\xi) = 
  \left\{
    \begin{array}{ll}
      1, & \abs{\xi}\le\frac{1}{2} \\
      0, & \abs{\xi}>1.
    \end{array}
  \right.
\]
Let $\phi(\xi):=\chi(\lambda_1^{-1}\xi)-\chi(\xi)$. For each $q \ge 0$, we let 
$\phi_q(\xi):=\phi(\lambda_q^{-1}\xi)$. For technical reasons, let $\phi_{-1}(\xi):=\chi(\xi)$.

Given a tempered distribution vector field $u$ on $\T^2$ and $q \ge 1$, an integer, the $q$th Littlewood-Paley projection of $u$ is given by 
\[
  u_q(x) := \Delta_q u(x) := \sum_{k\in\Z^2}\hat{u}(k)\phi_q(k)e^{ik \cdot x}
\]
where $\hat{u}(k)$ is the $k$th Fourier coefficient of $u$. Note that, by the Littlewood-Paley theorem, 
\[
  \norm{u}_{H^s} \sim \left(\sum_{q=-1}^\infty\lambda_q^{2s}\norm{u_q}_2^2\right)^{1/2}
\]
for each $u \in H^s$ and $s\in\R$.

\section{The Heat Equation}
\label{heat}

To aid the reader in following our techniques for the Navier-Stokes equations in Section~\ref{proof}, we consider the $n$-dimensional heat equation on the torus $\T^n$. 
\begin{equation}
\label{heat_eqn}
  \left\{
    \begin{array}{l}
      u_t - \nu\Delta u = f \\
      u(0) = u_0.
    \end{array}
  \right.
\end{equation}
Suppose $u_0 \in H^{\alpha+2}$ and $f \in H^\alpha$ for some fixed $\alpha\in\R$. Then, we will prove the following Theorem.

\begin{theorem}
Let $u$ be a solution to (\ref{heat_eqn}). Then, $u \in L^\infty([0,\infty),H^{\alpha+2})$.
\end{theorem}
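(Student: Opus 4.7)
The plan is to mimic, in the simpler heat-equation setting, the Littlewood-Paley estimates that will drive the proof of Theorem~\ref{main}. In other words, rather than writing down the explicit Duhamel formula (which would give the result instantly by a Fourier multiplier estimate), I would work block by block in frequency, derive a linear ODE for $\|u_q(t)\|_2^2$, integrate it, and then reassemble via the Littlewood-Paley characterization of $H^{\alpha+2}$.

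Concretely: apply the projector $\Delta_q$ to (\ref{heat_eqn}). Because $\Delta_q$ commutes with $\nu\Delta$, the block $u_q$ satisfies $(u_q)_t - \nu\Delta u_q = f_q$. Taking the $L^2$ inner product with $u_q$ and using that the Fourier support of $u_q$ sits in an annulus of radius $\sim\lambda_q$ (so $\|\nabla u_q\|_2^2 \gtrsim \lambda_q^2\|u_q\|_2^2$), followed by Young's inequality on the right-hand side, one obtains a differential inequality of the form
\[
  \frac{d}{dt}\|u_q(t)\|_2^2 + c\nu \lambda_q^2 \|u_q(t)\|_2^2 \leq \frac{1}{c\nu\lambda_q^2}\|f_q\|_2^2.
\]
Gronwall's lemma then yields
\[
  \|u_q(t)\|_2^2 \leq e^{-c\nu\lambda_q^2 t}\|u_q(0)\|_2^2 + \frac{1}{c^2\nu^2\lambda_q^4}\bigl(1-e^{-c\nu\lambda_q^2 t}\bigr)\|f_q\|_2^2,
\]
which is bounded uniformly in $t\geq 0$ by $\|u_q(0)\|_2^2 + (c\nu\lambda_q^2)^{-2}\|f_q\|_2^2$. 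This is the key step: the prefactor $\lambda_q^{-4}$ is exactly the two-derivative gain delivered by the heat semigroup at each frequency block.

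To finish, multiply the resulting bound by $\lambda_q^{2(\alpha+2)}$ and sum over $q\geq -1$. By the Littlewood-Paley characterization of the Sobolev norms recalled in Section~\ref{preliminaries},
\[
  \|u(t)\|_{H^{\alpha+2}}^2 \;\lesssim\; \sum_{q\geq -1}\lambda_q^{2(\alpha+2)}\|u_q(0)\|_2^2 + \frac{1}{c^2\nu^2}\sum_{q\geq -1}\lambda_q^{2(\alpha+2)-4}\|f_q\|_2^2 \;\sim\; \|u_0\|_{H^{\alpha+2}}^2 + \frac{1}{\nu^2}\|f\|_{H^\alpha}^2,
\]
uniformly in $t$, giving $u\in L^\infty([0,\infty),H^{\alpha+2})$.

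There is no real obstacle here; the linearity of the heat equation means the blocks decouple entirely, which is what makes the argument a clean model for the later Navier-Stokes analysis. The only mildly delicate points are making sure the Bernstein-type lower bound $\|\nabla u_q\|_2^2 \gtrsim \lambda_q^2\|u_q\|_2^2$ is applied with a constant that is uniform in $q$ (harmless since $\phi_q$ is a dilation of a fixed bump), and handling the low-frequency block $q=-1$ separately so that the factor $\lambda_q^{-4}$ does not create an artificial blow-up. In the nonlinear analogue of Section~\ref{proof}, by contrast, the blocks will be coupled through the advection term and the challenge will shift from integration to controlling cross-frequency interactions via the paraproduct.
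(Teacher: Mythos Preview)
Your proposal is correct and is essentially identical to the paper's proof: the paper also localizes in frequency (by testing against $(u_q)_q$), derives the same linear differential inequality for $\|u_q(t)\|_2^2$, integrates it via Duhamel, and then multiplies by $\lambda_q^{2\alpha+4}$ and sums. The only cosmetic differences are that the paper writes the dissipative term directly as $\nu\lambda_q^2\|u_q\|_2^2$ without tracking the Bernstein constant and does not single out the $q=-1$ block explicitly.
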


Note that, using the linearity of the heat equation, one can further show that $u \in C([0,T], H^{\alpha+2})$ for all $T \ge 0$.

The idea of our proof is to decompose our function using Littlewood-Paley theory. Then, apply Duhamel's formula to the decomposition before summing.

\begin{proof}
Multiply the first equation in (\ref{heat_eqn}) by $(u_q)_q:=\Delta_q(\Delta_q u)$ and integrate in space. This gives us that 
\[
  \frac{1}{2}\frac{d}{dt}\norm{u_q(t)}_2^2+\nu\lambda_q^2\norm{u_q(t)}_2^2
    \le \int_{\T^n} f_q \cdot u_q dx.
\]
Applying Cauchy-Schwartz followed by Young's inequality to the right-hand side, we get that 
\[
  \frac{d}{dt}\norm{u_q(t)}_2^2+\nu\lambda_q^2\norm{u_q(t)}_2^2
    \le \frac{1}{\nu\lambda_q^2}\norm{f_q}_2^2
\]

Next, we apply Duhamel's principle to get that 
\begin{align*}
  \norm{u_q(t)}_2^2 
    &\le \norm{u_q(0)}_2^2 e^{-\nu\lambda_q^2 t}
           +\frac{1}{\nu\lambda_q^2}\norm{f_q}_2^2
           \int_0^t e^{\nu\lambda_q^2(s-t)} ds. \\
    &\le \norm{u_q(0)}_2^2 e^{-\nu\lambda_q^2 t} 
           +\frac{1}{\nu\lambda_q^4}\norm{f_q}_2^2
           \left[1-e^{-\nu\lambda_q^2 t} \right].
\end{align*}
Multiplying by $\lambda_q^{2\alpha+4}$ and summing in $q$ yields the result.
\end{proof}

\section{The \texorpdfstring{$H^\alpha$}{H\^{}a} Case for \texorpdfstring{$\alpha \in (-1,0)$}{a in (-1,0)}}
\label{intermediate_case}

We begin our study of the 2D Navier-Stokes equations by investigating classical techniques. Here, we show that classical arguments yeild a uniform two derivative gain for $f \in H^\alpha$ for all $\alpha \in (-1,0)$. Note that when $\alpha := 0$ ($f \in L^2$), it is known classically that $u(t) \in H^2$ for all $t \ge 0$, a two derivative gain. We begin by estimating the nonlinear term in $H^\alpha$.

\subsection{Estimating the Nonlinear Term}

\begin{lemma}
\label{nonlinearity_estimates}
Let $u \in H^1 \cap H^{1-\beta}$ for $\beta \in (0,1)$. Then, 
\[
  \|B(u,u)\|_{H^{-\beta}} \le C \|u\|_{H^{1-\beta}} \|u\|_{H^1}.
\]
\end{lemma}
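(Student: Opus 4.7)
The plan is to reduce the estimate to a product estimate in $H^{1-\beta}$ and then apply a fractional Leibniz (Kato-Ponce) inequality, itself proved via Bony's paraproduct decomposition. Since $P_\sigma$ is bounded on every $H^s(\T^2)$ and $\nabla\cdot u=0$, we have $((u\cdot\nabla) u)_i = \partial_j(u_i u_j)$, hence
\[
  \|B(u,u)\|_{H^{-\beta}} \le C\|\nabla\cdot(u\otimes u)\|_{H^{-\beta}} \le C\|u\otimes u\|_{H^{1-\beta}}.
\]
It thus suffices to show $\|u\otimes u\|_{H^{1-\beta}} \lesssim \|u\|_{H^{1-\beta}}\|u\|_{H^1}$.

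The product estimate follows from the Kato-Ponce fractional Leibniz inequality applied with the H\"older-conjugate pair $(L^{2/\beta}, L^{2/(1-\beta)})$ (chosen so that $\beta/2 + (1-\beta)/2 = 1/2$):
\[
  \|fg\|_{H^{1-\beta}} \lesssim \|f\|_{L^{2/\beta}}\|g\|_{W^{1-\beta,\,2/(1-\beta)}} + \|g\|_{L^{2/\beta}}\|f\|_{W^{1-\beta,\,2/(1-\beta)}}.
\]
I prove this by Bony's decomposition $fg = T_f g + T_g f + R(f,g)$, estimating each piece on Littlewood-Paley blocks: the two paraproducts are localized in frequency to $|q'-q|\le C_0$, giving their block estimate directly from H\"older with the chosen exponents, while the remainder is controlled via the 2D Bernstein estimate $\|\Delta_{\tilde q}h\|_2\lesssim \lambda_{\tilde q}\|h\|_1$ together with the high-to-low gap $\tilde q\le q + C_1$ in $\Delta_{\tilde q}R(f,g)$, summed via a Young-type dyadic convolution. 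Setting $f=g=u$ and invoking the two Sobolev embeddings $H^{1-\beta}(\T^2)\hookrightarrow L^{2/\beta}$ (critical, using $\beta\in(0,1)$) and $H^1(\T^2)\hookrightarrow W^{1-\beta,\,2/(1-\beta)}$ (same scaling index, since $1 - 2/2 = (1-\beta) - (1-\beta)$) completes the estimate.

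The main obstacle is the endpoint character of the 2D setting: because $H^1(\T^2)\not\hookrightarrow L^\infty$, the classical paraproduct bound $\|T_f g\|_{H^s}\lesssim \|f\|_{L^\infty}\|g\|_{H^s}$ is unavailable, so one is forced to use the paired H\"older exponents $(2/\beta, 2/(1-\beta))$ that precisely compensate for the Sobolev deficit. This balancing is possible only because $\beta>0$, foreshadowing why the critical case $\beta=1$ demands the substantially different Fourier-space argument developed in Section~\ref{proof}.
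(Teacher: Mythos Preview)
Your argument is correct and follows a genuinely different route from the paper's. The paper works by duality: it pairs $(u\cdot\nabla)u$ with a test function $v\in H^\beta$, decomposes the \emph{trilinear} integral $\int u\cdot\nabla u\cdot v$ directly via Bony's paraproduct into three frequency-interaction regimes, and estimates each piece entirely in $L^2$-based norms using Bernstein's inequality to handle the low-frequency $L^\infty$ factor (together with an $\epsilon$-splitting of the dyadic weights). You instead pass to divergence form, reduce to the \emph{bilinear} product bound $\|u\otimes u\|_{H^{1-\beta}}$, invoke Kato--Ponce with the H\"older pair $(2/\beta,\,2/(1-\beta))$, and close with the two critical 2D Sobolev embeddings $H^{1-\beta}\hookrightarrow L^{2/\beta}$ and $H^1\hookrightarrow W^{1-\beta,\,2/(1-\beta)}$ (the latter being Hardy--Littlewood--Sobolev applied to $(-\Delta)^{-\beta/2}$). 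Your approach is more modular---it appeals to standard off-the-shelf inequalities---while the paper's is self-contained and never leaves $L^2$-based Sobolev spaces. Both are rigorous for $\beta\in(0,1)$.

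One small wording issue in your final paragraph: you write that the balancing ``is possible only because $\beta>0$'' and then cite the critical case $\beta=1$. In fact \emph{both} endpoints fail in your framework, for the same reason: as $\beta\to 0^+$ the first embedding $H^{1-\beta}\hookrightarrow L^{2/\beta}$ degenerates to $H^1\hookrightarrow L^\infty$, and as $\beta\to 1^-$ the second embedding $H^1\hookrightarrow W^{1-\beta,\,2/(1-\beta)}$ degenerates to the same failed inclusion. It is the upper endpoint $\beta=1$ (i.e.\ $\alpha=-1$) that is relevant to Section~\ref{proof}, so you should say $\beta<1$ there rather than $\beta>0$.
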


Note that when $\beta = 1$, the estimate becomes 
\[
  \|B(u,u)\|_{H^{-1}} \le C \|u\|_2 \|u\|_{H^1}.
\]
This is the classical estimate which is obtained using H\''older's inequality and the Ladyzhenskaya inequality. On the other hand, when $\beta = 0$, we may use interpolation to say that 
\[
  \|B(u,u)\|_2 \le C \|u\|_{H^1} \|u\|_{H^1} 
                   \le C \|u\|_2^{1/2} \|u\|_{H^2}^{1/2} \|u\|_{H^1}.
\]
This is the classical estimate which is obtained using H\''older's inequality followed by Agmon's inequality. Thus, our estimate accurately generalizes the classical estimates.

\begin{proof}
Let $v \in H^\beta$. We must estimate the integral
\[
  \int_{\T^2} u \cdot \grad u \cdot v dx.
\]
To do so, we will use Bony's paraproduct. Separating each term into it's Littlewood-Paley pieces, we apply the necessary cancellations to find that 
\begin{align*}
  \left| \int_{\T^2} u \cdot \grad u \cdot v dx \right|
    \le&    \sum_{\substack{|p-q|\le 2 \\ r < q+1}} 
              \int_{\T^2} \left| u_p \cdot \grad u_q \cdot v_r \right| dx \\
       & +  \sum_{\substack{|p-r|\le 2 \\ q < r+1}} 
              \int_{\T^2} \left| u_p \cdot \grad u_q \cdot v_r \right| dx \\
       & +  \sum_{\substack{|q-r|\le 2 \\ p < r+1}} 
              \int_{\T^2} \left| u_p \cdot \grad u_q \cdot v_r \right| dx \\
       & =: I + II + III.
\end{align*}

To estimate $I$, we use H\"older's inequality followed by Bernstein's inequality to find that 
\[
  I \le \sum_{\substack{|p-q|\le 2 \\ r < p+1}} 
           \| u_p \|_2 \| \grad u_q \|_2 \| v_r \|_\infty
    \le C \| u \|_{H^1} \sum_{r < p+1} \| u_p \|_2 \lambda_r \| v_r \|_2.
\]
Splitting the derivative $\lambda_r$ and applying Cauchy-Schwarz inequality gives us that
\[
  I \le C \| u \|_{H^1} 
               \underbrace{\left(\sum_{r < p+1} \lambda_r^{2-2\beta-2\epsilon} 
                 \lambda_p^{2\epsilon} \| u_p \|_2^2\right)^{1/2}}_{=: I_A}
               \underbrace{\left(\sum_{r < p+1} \lambda_r^{2\beta+2\epsilon} 
                 \lambda_p^{-2\epsilon} \| v_r \|_2^2 \right)^{1/2}}_{=: I_B}
\]
where $\epsilon \ll 1$ is chosen so that $\beta+\epsilon < 1$. 

For the first sum, $I_A^2$, we see that 
\begin{align*}
  I_A^2 &=   \sum_{p=-1}^\infty\sum_{r=-1}^p \lambda_r^{2-2\beta-2\epsilon}
               \lambda_p^{2\epsilon} \| u_p \|_2^2  \\
        &\le C \sum_{p=-1}^\infty \lambda_p^{2-2\beta} \| u_p \|_2^2  \\
        &\le C \| u \|_{H^{1-\beta}}^2.
\end{align*}
For the second sum, $I_B^2$, we must switch the order of summation as show below:
\begin{align*}
  I_B^2 &=   \sum_{p=-1}^\infty\sum_{r=-1}^p \lambda_r^{2\beta+2\epsilon}
               \lambda_p^{-2\epsilon} \| v_r \|_2^2 \\
        &=   \sum_{r=-1}^\infty \sum_{p=r}^\infty \lambda_r^{2\beta+2\epsilon}
               \lambda_p^{-2\epsilon} \| v_r \|_2^2  \\
        &\le C \sum_{r=-1}^\infty \lambda_r^{2\beta} \| v_r \|_2^2  \\
        &\le C \| v \|_{H^{\beta}}^2.
\end{align*}

To estimate $II$, we proceed as with $I$ using H\"older's inequality followed by Bernstein's inequality to give us that 
\[
  II \le \sum_{\substack{|p-r|\le 2 \\ q < r+1}} 
            \| u_p \|_2 \| \grad u_q \|_\infty \| v_r \|_2 
     \le C \| u \|_{H^1} \sum_{q < r+1} \lambda_q \| u_q \|_2 \| v_r \|_2.
\]
Similarly with $I$, we split the derivative $\lambda_q$ and apply Cauchy-Schwarz to get
\[
  II \le C \| u \|_{H^1} 
               \underbrace{\left(\sum_{q < r+1} \lambda_q^{2-2\beta+2\epsilon} 
                 \lambda_r^{-2\epsilon} \| u_q \|_2^2\right)^{1/2}}_{=: II_A}
               \underbrace{\left(\sum_{q < r+1} \lambda_q^{2\beta-2\epsilon} 
                 \lambda_p^{2\epsilon} \| v_r \|_2^2 \right)^{1/2}}_{=: II_B}
\]
Switching the order of summation in $II_A$ and proceeding as in $I$, we find that 
\begin{align*}
  II_A^2 &\le C \| u \|_{H^{1-\beta}}^2 \\
  II_B^2 &\le C \| v \|_{H^{\beta}}^2.
\end{align*}

Finally, to estimate $III$, we again use H\"older's inequality followed by Bernstein's inequality to get
\[
  II \le \sum_{\substack{|q-r| \le 2 \\ p < r+1}}
           \| u_p \|_\infty \| \grad u_q \|_2 \| v_r \|_2
     \le C \| u \|_{H^1} \sum_{p < r+1} \lambda_p \| u_p \|_2 \| v_r \|_2.
\]
The rest of the estimates for $III$ proceed exactly as in the case for $II$. 
\end{proof}

\subsection{Gaining One Derivative}

In this section, as well as the following section, we make {\it a priori} estimates. The calculations are done on the level of Galerkin approximations. One can then pass to the limit to obtain the stated bounds for the actual solutions.

\begin{theorem}
\label{one_deriv_gain}
Let $f \in H^\alpha$ and  $u(0) := u_0 \in H^{\alpha+1}$ for some $\alpha \in (-1,0)$. Then, there exists a solution $u(t)$ to (\ref{proj_nse}) so that $u \in L^\infty([0,\infty), H^{\alpha+1})$.
\end{theorem}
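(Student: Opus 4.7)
The plan is to run an \emph{a priori} energy estimate at the $H^{\alpha+1}$ level on the Galerkin approximations, then pass to the limit in the usual way. Concretely, I would test \eqref{proj_nse} against $A^{\alpha+1} u$ in $L^2$, yielding
\[
  \tfrac{1}{2}\tfrac{d}{dt}\|u\|_{H^{\alpha+1}}^2 + \nu\|u\|_{H^{\alpha+2}}^2
    = \ip{f}{A^{\alpha+1}u} - \ip{B(u,u)}{A^{\alpha+1}u}.
\]
For the forcing pairing I would bound $|\ip{f}{A^{\alpha+1}u}| \le \|f\|_{H^\alpha}\|u\|_{H^{\alpha+2}}$ and then absorb by Young's inequality into $\tfrac{\nu}{4}\|u\|_{H^{\alpha+2}}^2 + C_\nu \|f\|_{H^\alpha}^2$.

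For the nonlinear term I would apply Lemma~\ref{nonlinearity_estimates} with $\beta := -\alpha \in (0,1)$, noting that $1-\beta = 1+\alpha$, which gives $\|B(u,u)\|_{H^\alpha} \le C\|u\|_{H^{1+\alpha}}\|u\|_{H^1}$. Duality together with Young's inequality then yields
\[
  |\ip{B(u,u)}{A^{\alpha+1}u}|
    \le C\|u\|_{H^{1+\alpha}}\|u\|_{H^1}\|u\|_{H^{\alpha+2}}
    \le \tfrac{\nu}{4}\|u\|_{H^{\alpha+2}}^2 + C_\nu\|u\|_{H^{\alpha+1}}^2\|u\|_{H^1}^2.
\]
Combining the two estimates and discarding the viscous term gives a differential inequality of the form
\[
  \tfrac{d}{dt}\|u\|_{H^{\alpha+1}}^2
    \le C_\nu\|u\|_{H^{\alpha+1}}^2\,\|u\|_{H^1}^2 + C\|f\|_{H^\alpha}^2,
\]
which is the core estimate to close.

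The main obstacle is that a direct application of Gr\"onwall would only yield an exponentially growing bound in $t$, which is not the global $L^\infty$ assertion of the theorem. To get the uniform-in-time bound I would invoke the classical energy inequality \eqref{energy_ineq}, which (together with Poincar\'e, using the zero mean of $u$) gives $\|u(t)\|_2 \le C(u_0,f,\nu)$ uniformly in $t$ and
\[
  \sup_{t\ge 0}\int_t^{t+1}\|u(s)\|_{H^1}^2\,ds \le C(u_0,f,\nu).
\]
Interpolating $\|u\|_{H^{\alpha+1}} \le \|u\|_2^{-\alpha}\|u\|_{H^1}^{1+\alpha}$ and using Jensen (since $2(1+\alpha) < 2$) also produces a uniform bound on $\int_t^{t+1}\|u\|_{H^{\alpha+1}}^2\,ds$. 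I would then apply the \emph{uniform} Gr\"onwall lemma to the differential inequality above, obtaining a uniform-in-$t$ bound on $\|u(t)\|_{H^{\alpha+1}}^2$ for $t \ge 1$, and covering the initial layer $t \in [0,1]$ by the ordinary Gr\"onwall inequality with initial data $u_0 \in H^{\alpha+1}$. Passing to the Galerkin limit then gives a solution $u \in L^\infty([0,\infty),H^{\alpha+1})$.
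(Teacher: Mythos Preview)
Your proposal is correct and follows the same backbone as the paper: test \eqref{proj_nse} against $A^{\alpha+1}u$, estimate the nonlinearity via Lemma~\ref{nonlinearity_estimates} with $\beta=-\alpha$, absorb with Young's inequality, and arrive at
\[
  \frac{d}{dt}\|u\|_{H^{\alpha+1}}^2
    \le \frac{C}{\nu}\,\|u\|_{H^1}^2\,\|u\|_{H^{\alpha+1}}^2 + \frac{C}{\nu}\|f\|_{H^\alpha}^2,
\]
then close using the uniform control of $\int_t^{t+1}\|u\|_{H^1}^2\,ds$ coming from \eqref{energy_ineq}.

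The only differences are in how this last step is packaged. For the long-time bound you invoke the uniform Gr\"onwall lemma directly, supplying the required bound on $\int_t^{t+1}\|u\|_{H^{\alpha+1}}^2\,ds$ by the interpolation $\|u\|_{H^{\alpha+1}}\le C\|u\|_2^{-\alpha}\|u\|_{H^1}^{1+\alpha}$ together with H\"older in time. The paper instead runs a Chebyshev-type argument by hand: for each $t$ it selects a ``good'' $t_0\in[t-\epsilon,t]$ where $\|u(t_0)\|_{H^1}$ is controlled, and applies ordinary Gr\"onwall on $[t_0,t]$. These are two standard routes to the same conclusion; your use of uniform Gr\"onwall is arguably cleaner. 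On the initial layer you use ordinary Gr\"onwall on $[0,1]$, which works since $\int_0^1\|u\|_{H^1}^2\,ds$ is finite by \eqref{energy_ineq}; the paper instead interpolates $\|u\|_{H^1}\le C\|u\|_2^{1/(\alpha+2)}\|u\|_{H^{\alpha+2}}^{(\alpha+1)/(\alpha+2)}$ to obtain a Riccati-type inequality and closes with nonlinear Gr\"onwall. Your short-time argument is simpler and suffices here.
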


\begin{proof}

Taking the inner product of (\ref{proj_nse}) with $A^{\alpha+1} u$ and integrating in space, we find that 
\begin{align*}
  \frac{1}{2} \frac{d}{dt} \|u\|_{H^{\alpha+1}}^2 + \nu \|u\|_{H^{\alpha+2}}^2
    &\le \int_{\T^2} \left| B(u,u) \cdot A^{\alpha+1} u \right| dx
       + \int_{\T^2} \left| f \cdot A^{\alpha+1} u \right| dx \\
    &\le \|B(u,u)\|_{H^\alpha} \|u\|_{H^{\alpha+2}}
       + \|f\|_{H^\alpha} \|u\|_{H^{\alpha+2}}.
\end{align*}

Using Lemma~\ref{nonlinearity_estimates} and Young's inequality, we find that
\begin{align}
\label{Ha_est1}
  \frac{1}{2} \frac{d}{dt} \|u\|_{H^{\alpha+1}}^2 
      + \nu \|u\|_{H^{\alpha+2}}^2
    &\le C \|u\|_{H^1} \|u\|_{H^{\alpha+1}} \|u\|_{H^{\alpha+2}}
         + \|f\|_\alpha \|u\|_{H^{\alpha+2}} \\
  \frac{d}{dt} \|u\|_{H^{\alpha+1}}^2 
      + \nu \|u\|_{H^{\alpha+2}}^2
    &\le \frac{C}{\nu} \|u\|_{H^1}^2 \|u\|_{H^{\alpha+1}}^{2}
         + \frac{2}{\nu} \|f\|_\alpha^2.
\end{align}
Dropping the $H^{\alpha+2}$ term, and using Gronwall, we have that for $t \ge t_0 \ge 0$, 
\[
  \|u(t)\|_{H^{\alpha+1}}^2 \le \left(\|u(t_0)\|_{H^{\alpha+1}}^2 
                                  +  \frac{2}{\nu} \|f\|_{H^\alpha}^2(t-t_0) \right)
            \exp\left(\frac{C}{\nu}\int_{t_0}^t \|u(s)\|_{H^1}^2 ds \right).
\]
After using the embedding $H^1 \subset H^{\alpha+1}$, this becomes 
\begin{equation*}
  \|u(t)\|_{H^{\alpha+1}}^2 \le \left(\|u(t_0)\|_{H^1}^2 
                                  +  \frac{2}{\nu} \|f\|_{H^\alpha}^2(t-t_0) \right)
            \exp\left(\frac{C}{\nu}\int_{t_0}^t \|u(s)\|_{H^1}^2 ds \right).
\end{equation*}

By the energy inequality (\ref{energy_ineq}), we know that 
\[
  \nu \int_{t_0}^t \|u(s)\|_{H^1}^2 ds \le \|u_0\|_2^2
                                        +   \frac{1}{\nu}\|f\|_{H^{-1}}(t-t_0).
\]
Therefore, for $0 < \epsilon \le t$, we have that 
\[
  \left| \left\{ t_0 \in [t-\epsilon, t] : \|u(t_0)\|_{H^1} \ge M \right\} \right|
      \le \frac{1}{M^2} \left( \frac{1}{\nu} \|u_0\|_2^2 
                             + \frac{\epsilon}{\nu^2} \|f\|_{H^{-1}}^2\right)
\]

Letting 
\[
M := \sqrt{\frac{2}{\epsilon}\left( \frac{1}{\nu} \|u_0\|_2^2 
               + \frac{\epsilon}{\nu^2} \|f\|_{H^{-1}}^2\right)},
\]
we find that 
\[
  \left| \left\{ t_0 \in [t-\epsilon, t] : \|u(t_0)\|_{H^1} \ge M \right\} \right|
      \le \frac{\epsilon}{2}.
\]
Therefore, there exists $t_0 \in [t-\epsilon,t]$ so that 
\[
  \|u(t_0)\|_{H^1}^2 \le \frac{2}{\epsilon} \left( \frac{1}{\nu} \|u_0\|_2^2
                  + \frac{\epsilon}{\nu^2} \|f\|_{H^{-1}}^2 \right).
\]
So, the above argument shows us that for $t \ge \epsilon$,
\begin{align*}
  \|u(t)\|_{H^{\alpha+1}}^2 
    &\le \left(\frac{2}{\epsilon\nu} \|u_0\|_2^2
                             + \frac{2}{\nu^2} \|f\|_{H^{-1}}^2 
                             +  \frac{2\epsilon}{\nu} \|f\|_{H^\alpha}^2 \right)
                             e^{C_0} \\
    &\le \left(\frac{2}{\epsilon\nu} \|u_0\|_{H^{\alpha+1}}^2
                             + \frac{2}{\nu^2} \|f\|_{H^\alpha}^2 
                             +  \frac{2\epsilon}{\nu} \|f\|_{H^\alpha}^2 \right)
                             e^{C_0} 
\end{align*}
for 
\[
 C_0 := C \|u_0\|_{H^{\alpha+1}}^2 + \frac{C\epsilon}{\nu}\|f\|_{H^\alpha}^2.
\]

This gives boundedness of $u$ in $H^{\alpha+1}$ for all $t \ge \epsilon$ for any fixed $\epsilon > 0$. To show the boundedness of $u$ in $H^{\alpha+1}$ for small $t$, we go back to equation (\ref{Ha_est1}). Using interpolation, we have that 
\[
  \|u\|_{H^1} \le C \|u\|_2^{\frac{1}{\alpha+2}} 
                    \|u\|_{H^{\alpha+2}}^{\frac{\alpha+1}{\alpha+2}}.
\]
Thus, after using interpolation and Young's inequality on (\ref{Ha_est1}), we have that
\[
  \frac{1}{2} \frac{d}{dt} \|u\|_{H^{\alpha+1}}^2 
      + \nu \|u\|_{H^{\alpha+2}}^2
      \le \frac{C}{\nu^{2\alpha+3}} \|u\|_2^2 \|u\|_{H^{\alpha+1}}^{2\alpha+4}
         + \frac{1}{\nu}\|f\|_\alpha^2 \\
\]
Dropping the $H^{\alpha+2}$ term, we may use nonlinear Gronwall to say that the $H^{\alpha+1}$ norm remains bounded for small time. 

Combining this short-term bound with the previous long-term bound gives us that $u \in L^\infty([0,\infty), H^{\alpha+1})$, as required.

\end{proof}

\subsection{Gaining Two Derivatives}

We combine the result of the previous section with an analyticity argument to show the uniform gain of two derivatives in this regime. That is, we prove Theorem~\ref{intermediate} which we restate here for the reader's convenience.

\begin{theorem}
Let $f \in H^{\alpha}$ and $u(0) := u_0 \in H^{\alpha+2}$ for some $\alpha \in (-1,0)$. Then, there exists a solution $u(t)$ to (\ref{proj_nse}) so that $u \in L^\infty([0,\infty), H^{\alpha+2})$.
\end{theorem}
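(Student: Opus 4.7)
Plan: The strategy is to lift the $H^{\alpha+1}$-regularity of Theorem~\ref{one_deriv_gain} to the desired $H^{\alpha+2}$-regularity by combining a Littlewood-Paley mild estimate in the spirit of Section~\ref{heat} with a weakly singular Volterra inequality. The ``analyticity'' cited in the theorem corresponds to the parabolic smoothing of the Stokes semigroup $e^{-\nu t A}$ acting mode-by-mode, which is what makes the time-convolution with the singular force effectively two-derivative gaining.

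Applying $\Delta_q$ to (\ref{proj_nse}), pairing with $u_q$ in $L^2$, using Young's inequality on the source terms, and integrating via Duhamel at each mode, I would produce
\[
\|u_q(t)\|_2^2 \le \|u_q(0)\|_2^2 e^{-\nu\lambda_q^2 t} + \frac{C}{\nu^2\lambda_q^4}\|f_q\|_2^2 + \frac{C}{\nu\lambda_q^2}\int_0^t e^{-\nu\lambda_q^2(t-s)}\|B(u,u)_q(s)\|_2^2\,ds.
\]
Multiplying by $\lambda_q^{2\alpha+4}$ and summing over $q$, the first two terms reproduce the heat-equation bound of Section~\ref{heat} and give $\|u_0\|_{H^{\alpha+2}}^2 + C\|f\|_{H^\alpha}^2/\nu^2$. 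For the nonlinear piece, the elementary Fourier inequality $\sup_q\lambda_q^{2\alpha+2}e^{-\nu\lambda_q^2(t-s)}\le C(\nu(t-s))^{-(\alpha+1)}$ yields
\[
\|u(t)\|_{H^{\alpha+2}}^2 \le C_0 + \frac{C}{\nu^{2-\alpha}}\int_0^t (t-s)^{-(\alpha+1)}\|B(u,u)(s)\|_2^2\,ds.
\]

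Two observations close the estimate: because $\alpha+2>1$, the 2D Sobolev embedding $H^{\alpha+2}\hookrightarrow L^\infty$ gives $\|B(u,u)\|_2\le C\|u\|_{H^{\alpha+2}}\|u\|_{H^1}$; because $\alpha<0$, the kernel $(t-s)^{-(\alpha+1)}$ is integrable. Plugging both in and using $\|u\|_{H^1}^2\in L^1_{t,\loc}$ from the energy inequality~(\ref{energy_ineq}) produces a weakly singular Volterra inequality for $F(t):=\|u(t)\|_{H^{\alpha+2}}^2$, which is bounded on short time intervals by a standard contraction-type argument. Global-in-time control then follows by continuation: Theorem~\ref{one_deriv_gain} gives $u \in L^\infty_t H^{\alpha+1}\cap L^2_{t,\loc}H^{\alpha+2}$, so the set of times at which $u\in H^{\alpha+2}$ is dense in $[0,\infty)$, and the local argument can be restarted indefinitely without the $H^{\alpha+2}$-norm escaping to infinity.

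The main obstacle is the interplay between the Sobolev embedding and the kernel singularity: the embedding $H^{\alpha+2}\hookrightarrow L^\infty$ (used to estimate $\|B(u,u)\|_2$) requires $\alpha>-1$, while integrability of $(t-s)^{-(\alpha+1)}$ requires $\alpha<0$. Both conditions are sharp and break simultaneously at $\alpha=-1$, which is precisely why this classical-style argument degenerates at the endpoint $f\in H^{-1}$ and the finer Littlewood-Paley-in-Fourier approach of Section~\ref{proof} is needed there.
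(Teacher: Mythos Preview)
Your approach is genuinely different from the paper's and the local part is essentially sound, but the continuation to global time has a real gap.

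\textbf{Comparison.} The paper does \emph{not} argue via a Littlewood--Paley Duhamel estimate in $H^{\alpha+2}$. Instead it complexifies time, derives a Riccati-type inequality for $\|u(se^{i\theta})\|_{H^{\alpha+1}}^2$, and obtains analyticity of $t\mapsto u(t)$ in a complex domain $D$ with a uniform $H^{\alpha+1}$ bound there. Cauchy's integral formula then bounds $\|u_t\|_{H^{\alpha+1}}$ on compact subsets of $D$, and reading the equation $\nu Au=f-u_t-B(u,u)$ in $H^\alpha$ (using Lemma~\ref{nonlinearity_estimates}, interpolation, and Young) yields $\|u\|_{H^{\alpha+2}}$ bounded. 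The point is that the size of $D$, the $H^{\alpha+1}$ bound inside it, and hence the resulting $H^{\alpha+2}$ bound, depend only on $\|u(t_0)\|_{H^{\alpha+1}}$ --- which is uniformly controlled by Theorem~\ref{one_deriv_gain}. That is what makes the restart argument uniform and immediate.

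\textbf{The gap in your continuation.} In your scheme, the constant $C_0$ and the local time produced by the ``contraction-type argument'' both depend on $\|u(t_0)\|_{H^{\alpha+2}}^2$, not merely on $\|u(t_0)\|_{H^{\alpha+1}}$. You appeal to $u\in L^2_{t,\loc}H^{\alpha+2}$ to locate restart times, but on an interval of length $\epsilon$ this only furnishes a $t_0$ with $\|u(t_0)\|_{H^{\alpha+2}}^2\lesssim 1/\epsilon$ (the $L^2_tH^{\alpha+2}$ budget on short intervals does \emph{not} shrink with $\epsilon$, since it is fed by $\int\|u\|_{H^1}^2$, which is $O(1)$). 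Feeding $C_0\sim\epsilon^{-1}$ into your bootstrap gives a local time $T^\ast\sim C_0^{1/\alpha}\sim \epsilon^{-1/\alpha}$, and since $-1/\alpha>1$ for $\alpha\in(-1,0)$ one has $T^\ast\ll\epsilon$: the interval you can cover from $t_0$ is shorter than the gap you need to bridge. So ``restart indefinitely without the $H^{\alpha+2}$-norm escaping to infinity'' is exactly the statement that needs proof, and your argument as written does not supply it. A related symptom: in your Volterra inequality the kernel $(t-s)^{-(\alpha+1)}$ lies in $L^q_s$ only for $q<1/(\alpha+1)$, while interpolation from Theorem~\ref{one_deriv_gain} gives $\|u\|_{H^1}^2\in L^{-1/\alpha}_s$; the H\"older pairing is exactly borderline, so the singular Gronwall does not close with a coefficient that is merely $L^1$ in time.

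The fix the paper exploits --- and which your plan lacks --- is a mechanism that produces an $H^{\alpha+2}$ bound depending only on $H^{\alpha+1}$ data. Complex-time analyticity is one such mechanism; if you want to stay with your real-variable Duhamel scheme, you would need to show separately that the semigroup smoothing of the initial data (the term $\lambda_q^{2\alpha+4}e^{-\nu\lambda_q^2 t}\|u_q(0)\|_2^2\le C(\nu t)^{-1}\lambda_q^{2\alpha+2}\|u_q(0)\|_2^2$) together with a sharper treatment of the nonlinear convolution yields an $H^{\alpha+2}$ bound for $t\ge\epsilon$ that depends only on $\sup\|u\|_{H^{\alpha+1}}$ and $\epsilon$. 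That is not ``standard contraction'' and is the missing idea.
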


To use analyticity arguments, we first need to complexify the spaces $H^\alpha$ as well as the Navier-Stokes equations themselves. First, the complexified space $H^\alpha_\C$ is given by 
\[
  H^\alpha_\C := \{u = u_1 + i u_2 : u_1, u_2 \in H^\alpha\}
\]
with the inner product defined via linearity as 
\[
  \ip{u_1 + i u_2}{v_1 + i v_2}_{H^\alpha_\C} 
    := \ip{u_1}{v_1}_{H^\alpha} + \ip{u_2}{v_2}_{H^\alpha} 
     + i (\ip{u_2}{v_1}_{H^\alpha} - \ip{u_1}{v_2}_{H^\alpha}). 
\]
When we write $\cip{u}{v}$, we are using the usual functional pairing, in a complex sense.

We will let the time $t := se^{i\theta}$. It is known, in this setting, that there exist unique, analytic solutions to the Galerkin system for complex time $t$ in some neighborhood of the origin. Moreover, the restriction of these solutions to the real line agree with the usual Galerkin approximations.

\begin{proof}
To begin, we multiply the complexified Navier-Stokes equations by $e^{i\theta}$, take the inner product with $A^{\alpha+1}u$, and take the real part. This gives us
\begin{align*}
  \frac{1}{2} \frac{d}{ds} \|u(se^{i\theta})\|_{H^{\alpha+1}}^2 
    &+ \nu \cos(\theta) \|u(se^{i\theta})\|_{H^{\alpha+2}}^2 \\
    &= \text{Real} 
         \left[
            e^{i\theta}(\cip{B(u,u)}{A^{\alpha+1}u}+\cip{f}{A^{\alpha+1}u})
         \right].
\end{align*}
Estimating the right-hand side, we first see, as in the real case, that 
\[
  \left|\cip{f}{A^{\alpha+1}u}\right|
    \le \|f\|_{H^\alpha} \|u\|_{H^{\alpha+2}}
    \le \frac{1}{\nu \cos(\theta)} \|f\|_{H^\alpha}^2 
      + \frac{\nu \cos(\theta)}{4} \|u\|_{H^{\alpha+2}}^2
\]
Next, we use Lemma~\ref{nonlinearity_estimates} to see that 
\begin{align*}
  \left|\cip{B(u,u)}{A^{\alpha+1}u}\right|
    &\le \|B(u,u)\|_{H^\alpha} \|u\|_{H^{\alpha+2}} \\
    &\le C \|u\|_{H^{\alpha+1}}^2 
           \|u\|_{H^{\alpha+2}}^{\frac{2\alpha+3}{\alpha+2}} \\
    &\le \frac{C}{(\nu \cos(\theta))^{2\alpha+3}} \|u\|_{H^{\alpha+1}}^{2\alpha+6} 
       + \frac{\nu \cos(\theta)}{4} \|u\|_{H^{\alpha+2}}^2.
\end{align*}
Thus, we obtain the Riccati-type inequality 
\[
  \frac{d}{dt} \|u\|_{H^{\alpha+1}}^2 + \nu \cos(\theta) \|u\|_{H^{\alpha+2}}^2
    \le \frac{2}{\nu \cos(\theta)} \|f\|_{H^\alpha}^2 
      + \frac{C}{(\nu \cos(\theta))^{2\alpha+3}} \|u\|_{H^{\alpha+1}}^{2\alpha+6}.
\]

This inequality shows us that for some time $\|u(t)\|_{H^{\alpha+1}} \le M$ for some fixed $M > 0$ and $t \le T := T(\|u_0\|_{H^{\alpha+1}},\nu,f,\theta)$. Therefore, the solutions to the complexified Navier-Stokes equations extend to analytic solutions in a neighborhood $D$ of the origin  given by 
\[
  D := \{t = se^{i\theta} : 0 < s < T, |\theta|<\pi/2\}.
\]
Note that $D$ is symmetric across the real axis, by construction. Also, note that within $D$, $\|u(t)\|_{H^{\alpha+1}} < M$.

Fix a compact set $K \subset D$. By Cauchy's formula with $\gamma$ a circle in $K$ of radius $r < d(K, \partial D)$, we have that 
\[
  \frac{du}{dt}(t) = \frac{1}{2\pi i} \int_{\gamma} \frac{u(z)}{(z-t)^2} dz
\]
for all $t \in K$. Taking the $H^{\alpha+1}$ norm of this equation, we get that 
\[
  \|u_t(t)\|_{H^{\alpha+1}} \le \frac{M}{r}.
\]

Within $K$, we find that 
\begin{align*}
  \nu \|A u\|_{H^\alpha} 
                \numberthis \label{two_deriv_bound}
    &\le \|u_t\|_{H^\alpha} + \|B(u,u)\|_{H^\alpha} + \|f\|_{H^\alpha} \\
    &\le \mu_1 \|u_t\|_{H^{\alpha+1}} + C\|u\|_{H^1}\|u\|_{H^{\alpha+1}} 
                                + \|f\|_{H^\alpha} \\
    &\le \mu_1 \|u_t\|_{H^{\alpha+1}} + C\|u\|_2^{\frac{1}{\alpha+2}} 
                                  \|u\|_{H^{\alpha+1}} 
                                  \|u\|_{H^{\alpha+2}}^{\frac{\alpha+1}{\alpha+2}}
                                + \|f\|_{H^\alpha} \\
    &\le \mu_1 \|u_t\|_{H^{\alpha+1}} + \frac{C}{\nu^{\alpha+1}}\|u\|_2 
                                  \|u\|_{H^{\alpha+1}}^{\alpha+2} 
                                + \frac{\nu}{2} \|u\|_{H^{\alpha+2}}
                                + \|f\|_{H^\alpha}
\end{align*}
where we used the Poincar\'e inequality with the Poincar\'e constant $\mu_1$ along with Lemma~\ref{nonlinearity_estimates} in the second line; we used interpolation in the second line; and we used Young's inequality in the final line. 

Moving the $H^{\alpha+2}$ terms to the same side of the equation and noting that $\|u_t\|_{H^{\alpha+1}}$ is bounded by analyticity, $\|u\|_2$ is bounded by the energy inequality (\ref{energy_ineq}), and $\|u\|_{H^{\alpha+1}}$ is bounded by Theorem~\ref{one_deriv_gain}, we now know that $\|u\|_{H^{\alpha+2}}$ is bounded in the compact set $K$.

Using the uniform boundedness of $\|u\|_{H^{\alpha+1}}$ obtained in Theorem~\ref{one_deriv_gain}, we can rerun this argument with the same bounds at each starting point $t_0 \in [0,\infty)$. Thus, $\|u\|_{H^{\alpha+2}}$ is uniformly bounded in a complex neighborhood of the real axis. In particular, $\|u(t)\|_{H^{\alpha+2}} < C < \infty$ for each $t \in [0,\infty)$.
\end{proof}

\section{The \texorpdfstring{$H^{-1}$}{H\^{}-1} Case}
\label{proof}

In this section, we present the proof of Theorem~\ref{main} which we again restate here for the reader's convenience.

\begin{theorem}
Let $u$ be the unique solution to (\ref{nse}) with $u(0):=u_0 \in H^1$. Then, there exists $T:=T(u_0,f)$ so that $u \in L^\infty([0,t_0], H^1)$ for all $0 < t_0 < T$.
\end{theorem}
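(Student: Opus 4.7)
My plan is to mimic the Littlewood--Paley argument of Section~\ref{heat} for the heat equation, with the nonlinearity handled via Bony's paraproduct. As in Section~\ref{intermediate_case}, the estimates are carried out \emph{a priori} on Galerkin approximations and passed to the limit. Applying $\Delta_q$ to the unprojected equation~(\ref{nse}), taking the $L^2$ inner product with $u_q$, and using $\dv u=0$ to eliminate the pressure, I obtain
\[
  \tfrac{1}{2}\tfrac{d}{dt}\|u_q\|_2^2 + \nu\lambda_q^2\|u_q\|_2^2
    \leq |\ip{\Delta_q(u\cdot\nabla u)}{u_q}| + |\ip{f_q}{u_q}|.
\]
The force term is controlled, exactly as in the heat-equation proof, by Cauchy--Schwarz followed by Young's inequality, producing the source $\frac{1}{2\nu\lambda_q^2}\|f_q\|_2^2$ after absorbing half of the dissipation.

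Next I would apply Duhamel's principle to the resulting scalar inequality for $\|u_q\|_2^2$, giving
\[
  \|u_q(t)\|_2^2 \leq e^{-\nu\lambda_q^2 t}\|u_q(0)\|_2^2
    + \frac{\|f_q\|_2^2}{\nu^2\lambda_q^4}
    + 2\int_0^t e^{\nu\lambda_q^2(s-t)}|N_q(s)|\,ds,
\]
where $N_q(s):=\ip{\Delta_q(u\cdot\nabla u)(s)}{u_q(s)}$. Multiplying by $\lambda_q^2$ and summing, the Littlewood--Paley theorem converts the first two pieces into $C\|u_0\|_{H^1}^2 + C\nu^{-2}\|f\|_{H^{-1}}^2$, which is the desired two-derivative gain on the force, exactly paralleling the heat-equation case with $\alpha=-1$. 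What remains is to bound the nonlinear sum
\[
  \mathcal{N}(t) := \sum_q \lambda_q^2 \int_0^t e^{\nu\lambda_q^2(s-t)}|N_q(s)|\,ds.
\]

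The core of the argument is to estimate $\mathcal{N}(t)$ by a functional of $\|u(\cdot)\|_{H^1}$ that closes into a nonlinear Gronwall inequality. I would decompose $u\cdot\nabla u$ via Bony's paraproduct and then apply $\Delta_q$, so only the same three classes of triples $(p,q',r)$ that appear in the proof of Lemma~\ref{nonlinearity_estimates} contribute. On each block, H\"older together with the 2D Bernstein inequality $\|w_p\|_\infty \lesssim \lambda_p\|w_p\|_2$ trades the unavailable $L^\infty$ control for $L^2$ control at the price of a power of $\lambda_p$, which I expect to absorb by pairing with $u_q$ and against the Duhamel kernel $e^{\nu\lambda_q^2(s-t)}$. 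The target is a pointwise bound of the form $|N_q(s)| \leq C\,d_q(s)\,\lambda_q\|u_q(s)\|_2\,\|u(s)\|_{H^1}$ with $(d_q(s))_q$ square-summable of $\ell^2$-norm $\lesssim\|u(s)\|_{H^1}$; a Cauchy--Schwarz in $q$, together with the $e^{\nu\lambda_q^2(s-t)}$ factor, should then yield $\mathcal{N}(t)\lesssim \int_0^t \|u(s)\|_{H^1}^{p}\,ds$ for some $p>2$.

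Setting $y(t):=\|u(t)\|_{H^1}^2$, the combined estimate becomes a superlinear Gronwall/Riccati inequality $y(t) \leq A + B\int_0^t y(s)^{p/2}\,ds$ with $p>2$ and $A,B$ depending only on $u_0$, $f$, $\nu$; this produces a finite $T=T(u_0,f,\nu)>0$ below which $y$ remains bounded, hence $u\in L^\infty([0,t_0],H^1)$ for every $0<t_0<T$. The principal obstacle is exactly the paraproduct step: since $H^1\not\hookrightarrow L^\infty$ in two dimensions, the low--high block, in which a low-frequency tail of $u$ naturally wants to sit in $L^\infty$, cannot be controlled by $\|u\|_{H^1}$ alone, and the frequency-localized Duhamel kernel $e^{\nu\lambda_q^2(s-t)}$ must be exploited block-by-block to soak up the resulting logarithmic deficit. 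This is the same reason the classical argument of Theorem~\ref{intermediate} fails at $\alpha=-1$, and why only a \emph{local}-in-time (rather than global) $H^1$ bound is to be expected from this route.
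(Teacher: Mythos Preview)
Your overall architecture --- Littlewood--Paley localization, Duhamel on each shell, sum in $q$, then a nonlinear Gronwall --- is exactly the paper's. The force term and the initial data are handled identically. The genuine gap is in your treatment of the nonlinearity.

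Concretely, your stated target bound $|N_q(s)|\leq C\,d_q(s)\,\lambda_q\,\|u_q(s)\|_2\,\|u(s)\|_{H^1}$ with $\|(d_q)\|_{\ell^2}\lesssim\|u\|_{H^1}$ \emph{is} achievable (it follows from $u\cdot\nabla u=\nabla\cdot(u\otimes u)$, Bernstein, and the 2D Ladyzhenskaya inequality), but it does not close. After multiplying by $\lambda_q^2$ and summing, any Cauchy--Schwarz in $q$ forces you to bound $\sup_q \lambda_q^2 e^{-\nu\lambda_q^2(t-s)}\sim (t-s)^{-1}$ (or equivalently $\bigl(\sum_q \lambda_q^4 e^{-2\nu\lambda_q^2(t-s)} d_q^2\bigr)^{1/2}\lesssim (t-s)^{-1}\|u\|_{H^1}$), producing $\mathcal N(t)\lesssim\int_0^t (t-s)^{-1}\|u(s)\|_{H^1}^3\,ds$, which is \emph{not} integrable at $s=t$. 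The ``logarithmic deficit'' you anticipate in the low--high block is not the issue; the exponent of the time singularity is.

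The paper avoids this by \emph{not} using the paraproduct here. It applies a single H\"older step,
\[
  \int_{\T^2} (u\cdot\nabla u)\cdot (u_q)_q\,dx \;\leq\; C\,\|u\|_{r}\,\|u\|_{H^1}\,\|u_q\|_{\rho},
  \qquad \tfrac1r+\tfrac1\rho=\tfrac12,\ 2<r<\infty,
\]
then Sobolev ($\|u\|_r\lesssim\|u\|_{H^1}$) and Bernstein ($\|u_q\|_\rho\lesssim\lambda_q^{p}\|u_q\|_2$ with $p=\tfrac{\rho-2}{2\rho}\in(0,1)$). The crucial move is to apply Young's inequality \emph{before} Duhamel, absorbing $\|u_q\|_2$ into the dissipation and leaving the source term $\dfrac{C}{\nu\lambda_q^{2-2p}}\,\|u(s)\|_{H^1}^4$, which \emph{no longer depends on $u_q$}. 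After Duhamel, multiplying by $\lambda_q^2$ and summing, the nonlinear contribution becomes
\[
  \frac{C}{\nu}\int_0^t\Bigl(\sum_q \lambda_q^{2p}\,e^{\nu\lambda_q^2(s-t)}\Bigr)\,\|u(s)\|_{H^1}^4\,ds,
\]
a \emph{pure} kernel sum in $q$. The paper then shows, via a frequency cutoff $Q(s)$ chosen so that $e^{\nu\lambda_q^2(s-t)}\leq\lambda_q^{-2p-1}$ for $q\geq Q(s)$, that this sum is bounded by $C\bigl((\nu(t-s))^{-2/3}+1\bigr)$ for $p=\tfrac14$. Since $2/3<1$, the resulting integral inequality
\[
  \|u(t)\|_{H^1}^2 \;\leq\; \|u_0\|_{H^1}^2+\frac{2}{\nu^2}\|f\|_{H^{-1}}^2
  + \frac{C}{\nu}\int_0^t\bigl((\nu(t-s))^{-2/3}+1\bigr)\|u(s)\|_{H^1}^4\,ds
\]
\emph{does} close via nonlinear Gronwall, giving the local-in-time $T(u_0,f)$. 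The two ideas you are missing are: (i) decouple the nonlinear source from $u_q$ by Young's inequality \emph{prior} to Duhamel, and (ii) estimate the resulting scalar kernel sum $\sum_q\lambda_q^{2p}e^{\nu\lambda_q^2(s-t)}$ directly to obtain an integrable power of $t-s$.
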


The reason analyticity methods from the previous section fails are as follows: Note that when $\alpha = -1$, the inequality in (\ref{two_deriv_bound}) becomes
\[
  \nu \|A u\|_{H^{-1}}
    \le \|u_t\|_{H^{-1}} + C\|u\|_{H^1} \|u\|_{H^2} + \|f\|_{H^{-1}}
\]
since interpolating between the $H^1$ norm between $L^2$ and $H^{\alpha+2}$ fails. We are now unable to use Young's inequality to split the $H^1$ and $H^2$ norms to move the $\|u\|_{H^2}$ terms to the left-hand side. Thus, we must use another tactic.

Again, note that in this section, we use the unprojected Navier-Stokes equations so that the use of harmonic analysis techniques are more easily followed.

\begin{proof}
Multiply the first equation in (\ref{nse}) with $(u_q)_q:=\Delta_q\left(\Delta_q u\right)$ and integrate in space. This becomes 
\begin{align*}
  \frac{1}{2}\frac{d}{dt}\|u_q(t)\|_2^2+\nu\lambda_q^2\|u_q(t)\|_2^2 = 
    \int_{\T^2} (u\cdot\grad u)\cdot (u_q)_q dx+\int_{\T^2} f_q\cdot u_q dx.
\end{align*}
Apply Cauchy-Swartz and Young's inequality to the second term on the right-hand side. This gives
\begin{align*}
  \frac{1}{2}\frac{d}{dt}\|u_q(t)\|_2^2+\nu\lambda_q^2\|u_q(t)\|_2^2 
    \le& \int_{\T^2} u\cdot\grad u\cdot (u_q)_q dx
           + \frac{\nu\lambda_q^2}{4}\|u_q(t)\|_2^2 
           + \frac{1}{\nu\lambda_q^2}\|f_q\|_2^2.
\end{align*}

For the first term on the right-hand side, use H\"older's inequality. 
\begin{equation*}
  \int_{\T^2} u \cdot \grad u \cdot (u_q)_q 
      \le C\|u\|_r \|u\|_{H^1} \|u_q\|_\rho
  \qquad \frac{1}{r}+\frac{1}{\rho}=\frac{1}{2}.
\end{equation*}
Assume that $2<r<\infty$. Then, Applying the Sobolev and Bernstein inequalities give us that 
\begin{align*}
  \frac{1}{2}\frac{d}{dt}\|u_q(t)\|_2^2
      +\frac{3}{4}\nu\lambda_q^2\|u_q(t)\|_2^2 
    \le& C\|u(t)\|_r \|u(t)\|_{H^1} \|u_q(t)\|_\rho
            +\frac{1}{\nu\lambda_q^2} \|f_q\|_2 \\
    \le& C\|u(t)\|_{H^1}^2 \lambda_q^{(\rho-2)/2\rho} \|u_q(t)\|_2
            +\frac{1}{\nu\lambda_q^2} \|f_q\|_2^2 \\
\end{align*}

For simplicity of notation, let $p\in(0,1)$ be given by $p:=(\rho-2)/2\rho$. Then, using Young's inequality, we find that 
\begin{equation*}
  \frac{d}{dt}\|u_q(t)\|_2^2+\nu\lambda_q^2\|u_q(t)\|_2^2
    \le \frac{C}{\nu\lambda_q^{2-2p}} \|u(t)\|_{H^1}^4
            +\frac{2}{\nu\lambda_q^2} \|f_q\|_2^2.
\end{equation*}

Next, we apply Duhamel's principle. This gives 
\begin{align*}
  \|u_q(t)\|_2^2
    \le e^{-\nu\lambda_q^2t} \|u_q(0)\|_2^2
                \numberthis \label{integrated_force}
       &   +\frac{2}{\nu^2}\lambda_q^{-4} \|f_q\|_2^2
             \left[1-e^{-\nu\lambda_q^2t}\right] \\
       &   +\frac{C}{\nu}\int_0^te^{\nu\lambda_q^2(s-t)}\lambda_q^{2p-2}
             \|u(s)\|_{H^1}^4 ds
\end{align*}
Multiply this through by $\lambda_q^2$ and sum in $q$. We find that 
\begin{equation}
\label{H1_est}
  \|u(t)\|_{H^1}^2 \le \|u_0\|_{H^1}^2
    +\frac{2}{\nu^2} \|f\|_{H^{-1}}^2
    +\frac{C}{\nu}\int_0^t\sum_qe^{\nu\lambda_q^2(s-t)}\lambda_q^{2p}
        \|u(s)\|_{H^1}^4 ds.
\end{equation}

\begin{remark}
It is worthwhile to note that inequality (\ref{integrated_force}), obtained via integrating the force term, can also be obtained using a non-autonomous (or time-dependent) force. To obtain this, we need $f \in L^\infty_{loc}([0,\infty),H^{-1})$ with a ``dominating function in Fourier space." We mean that there exists a $g \in H^{-1}$ with 
\[
  \|f_q(t)\|_2 \le \|g_q\|_2
\]
for all $t \ge 0$ and $q \ge Q$ for some finite integer $Q \ge -1$.
\end{remark}

Taking a closer look at the final integral, consider the sum 
\begin{equation}
\label{sum}
  \sum_q e^{\nu\lambda_q^2(s-t)}\lambda_q^{2p}.
\end{equation}
Fix a constant $\gamma > 0$ to be determined later. Then, let $Q_0 > 1$ be chosen so that $\ln \lambda_q \le (\lambda_q)^\gamma$ for all $q \ge Q_0$. Then, define 
\begin{align*}
  Q(s) & :=\min\left\{q \ge Q_0:
           e^{\nu\lambda_q^2(s-t)} \le \lambda_q^{-2p-1}\right\} \\
  \Lambda(s) & := \lambda_{Q(s)}.
\end{align*}
We estimate the integral as follows
\[
  \int_0^t\sum_qe^{\nu\lambda_q^2(s-t)}\lambda_q^{2p}
          \|u(s)\|_{H^1}^4
    \le \underbrace{\int_{[0,t]\cap\mathbb{1}_{Q(s)\le Q_0}}}_{:= I}
        +\underbrace{\int_{[0,t]\cap\mathbb{1}_{Q(s)> Q_0}}}_{:=II}.
\]

For $I$, we have that 
\begin{align*}
  I &\le \int_0^t \|u(s)\|_{H^1}^4
         \left(\sum_{q\le Q_0} e^{\nu\lambda_q^2(s-t)}\lambda_q^{2p}
             + \sum_{q > Q_0}\lambda_q^{-2}\right)ds \\
    &\le \int_0^t\|u(s)\|_{H^1}^4 (Q_0\Lambda_{Q_0}^{2p}+1)ds \\
    &\le C\int_0^t\|u(s)\|_{H^1}^4ds.
\end{align*}

For $II$, we see that for a fixed $s$, (\ref{sum}) can be estimated by 
\begin{equation*}
  \sum_{q \le Q(s)}e^{\nu\lambda_q^2(s-t)}\lambda_q^{2p}
    +\sum_{q>Q(s)}\lambda_q^{-1}
  \le Q(s)\Lambda(s)^{2p}+1.
\end{equation*}
By the definition of $\Lambda$, $2^{-1}\Lambda$ satisfies
\[
  2^{2p+1}\Lambda^{-2p-1}\le e^{\nu2^{-2}\Lambda^2(s-t)}
    \iff 
  \Lambda^2 \le \frac{8(p + 1/2)}{\nu(t-s)}\ln\Lambda.
\]
But, $\ln\Lambda\le\Lambda^\gamma$ by definition. Thus, 
\[
  \Lambda^{2-\gamma} \le \frac{8(p + 1/2)}{\nu(t-s)}.
\]
for $\gamma<2$.

Proceding in much the same way as we did with $I$, we see that 
\begin{align*}
  II &\le   \int_0^t \|u(s)\|_{H^1}^4
            \left(\sum_{q\le Q(s)} \Lambda(s)^{2p}
                + \sum_{q > Q(s)}\lambda_q^{-2}\right) ds \\
     &\le C \int_0^t \|u(s)\|_{H^1}^4 (Q(s)\Lambda(s)^{2p}+1) ds \\
     &\le C \int_0^t \|u(s)\|_{H^1}^4 (\ln\Lambda(s)
                                          \Lambda(s)^{2p}+1) ds \\
     &\le C \int_0^t \|u(s)\|_{H^1}^4 (\Lambda(s)^{2p+\gamma}+1) ds \\
     &\le C \int_0^t \left(\frac{1}
              {(\nu(t-s))^{(2p+\gamma)/(2-\gamma)}}+1\right)
              \|u(s)\|_{H^1}^4 ds.
\end{align*}
Putting these estimates together with (\ref{H1_est}), we have that 
\begin{align*}
  \|u(t)\|_{H^\alpha}^2 \le \|u_0\|_{H^\alpha}^2
    +& \frac{2}{\nu^2} \|f\|_{H^{\alpha-2}}^2  \\
    +& \frac{C}{\nu} \int_0^t \left(\frac{1}
              {(\nu(t-s))^{(2p+\gamma)/(2-\gamma)}}+1\right)
              \|u(s)\|_{H^1}^4 ds.
\end{align*}

Next, let $p:=1/4$ and $\gamma:=1/2$. Then, $(2p+\gamma)/(2-\gamma)=2/3$. Note that this choice of $\gamma$ means that $Q_0 = 2$. This gives us that 
\begin{align*}
  \|u(t)\|_{H^1}^2 \le \|u_0\|_{H^1}^2
    +&\frac{2}{\nu^2} \|f\|_{H^{-1}}^2 \\
    +&\frac{C}{\nu} \int_0^t \left((\nu(t-s))^{-2/3}+1\right)
        \|u(s)\|_{H^1}^4 ds.
\end{align*}
An application of nonlinear Gronwall leads to the desired result.
\end{proof}

\bibliographystyle{plain}
\bibliography{refs}

\end{document}